\DeclareMathOperator{\Diff}{Diff}
\DeclareMathOperator{\Aut}{Aut}
\DeclareMathOperator{\Lie}{Lie}
\DeclareMathOperator{\SL}{SL}
\DeclareMathOperator{\GCD}{GCD}
\newtheorem{thm}{Theorem}
\newtheorem{lem}{Lemma}
\newtheorem{prop}{Proposition}
\newtheorem{cor}{Corollary}
\begin{document}
\title[Rigidity of the \'{A}lvarez class]{Rigidity of the \'{A}lvarez class of Riemannian foliations with nilpotent structure Lie algebras}
\author{Hiraku Nozawa}
\address{Graduate School of Mathematical Sciences, University of Tokyo, 3-8-1 Komaba, Meguro, Tokyo, 153-8914, Japan}
\email{nozawahiraku@06.alumni.u-tokyo.ac.jp}

\begin{abstract}
We show that if the structure algebra of a Riemannian foliation $\mathcal{F}$ on a closed manifold $M$ is nilpotent, the integral of the \'{A}lvarez class of $(M,\mathcal{F})$ along every closed path is the exponential of an algebraic number. As a corollary, we prove that the \'{A}lvarez class and geometrically tautness of Riemannian foliations on a closed manifold $M$ are invariant under deformation, if the fundamental group of $M$ has polynomial growth.
\end{abstract}
\maketitle

\section{Introduction}

A Riemannian foliation $\mathcal{F}$ on a closed manifold $M$ is geometrically taut if there exists a bundle-like metric $g$ on $M$ such that every leaf of $\mathcal{F}$ is a minimal submanifold of $(M,g)$. Geometrically tautness of Riemannian foliations is a purely differential geometric property, but it is known that it has remarkable relations with the cohomological properties of $(M,\mathcal{F})$. For examples, Masa~\cite{Mas} characterized the geometrically tautness of $(M,\mathcal{F})$ by the nontriviality of the top degree part of the basic cohomology of $(M,\mathcal{F})$ and \'{A}lvarez L\'{o}pez~\cite{Alv} defined a cohomology class $[\kappa_b]$ of degree $1$ for $(M,\mathcal{F})$ which vanishes if and only if $(M,\mathcal{F})$ is geometrically taut. We call $[\kappa_b]$ the \'{A}lvarez class of $(M,\mathcal{F})$. In this paper, we show that \'{A}lvarez classes of Riemannian foliations have a rigidity property, if the structure Lie algebra is nilpotent. As a corollary of the rigidity of \'{A}lvarez classes, we obtain the invariance of geometrically tautness of Riemannian foliations under deformation, if the fundamental group of the ambient manifold has polynomial growth.

The main result in this paper is the following theorem: 

\begin{thm}\label{expalg}
 Let $M$ a closed manifold and $\mathcal{F}$ be a Riemannian foliation on $M$ with nilpotent structure Lie algebra. Then $e^{\int_{\gamma} [\kappa_b]}$ is an algebraic number for every $\gamma$ in $\pi_1(M)$ where $[\kappa_b]$ is the \'{A}lvarez class of $(M,\mathcal{F})$.
\end{thm}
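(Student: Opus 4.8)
The plan is to transfer the problem to Molino's transverse frame bundle, where the closures of the leaves become the fibres of a fibration $\pi\colon M^{\#}_{0}\to W$ over a closed manifold, then to show that the \'{A}lvarez class is pulled back from $W$, to identify the resulting class with the logarithmic determinant of the monodromy of Molino's commuting sheaf, and finally to observe that this monodromy preserves the holonomy lattice of the Lie foliation carried by a leaf closure, so that its determinant is an algebraic number.

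\emph{Reduction.} Let $\mathcal{F}^{\#}$ be the lift of $\mathcal{F}$ to the transverse orthonormal frame bundle $p\colon M^{\#}\to M$ and let $M^{\#}_{0}$ be a connected component; then $p$ restricts to a surjective fibre bundle $M^{\#}_{0}\to M$ with compact fibre, so the image of $\pi_{1}(M^{\#}_{0})\to\pi_{1}(M)$ has finite index, and since $[\kappa_{b}]$ is natural under $p$ and algebraic numbers are closed under extraction of roots, it suffices to prove the theorem for $(M^{\#}_{0},\mathcal{F}^{\#})$ and for $\gamma$ in that image; after a further finite cover we may also assume all the foliations below transversally orientable. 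By Molino's structure theorem $\overline{\mathcal{F}^{\#}}$ is simple: the leaf closures are the fibres of a fibration $\pi\colon M^{\#}_{0}\to W$ onto a closed manifold, and $\mathcal{F}^{\#}$ restricts on each fibre $\overline{L}$ to a Lie foliation with dense leaves modelled on the simply connected nilpotent Lie group $G_{0}$ with $\Lie(G_{0})=\mathfrak{g}_{0}$, the structure Lie algebra, which is unchanged by the lift. Write $(M,\mathcal{F})$ for $(M^{\#}_{0},\mathcal{F}^{\#})$ from now on.

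\emph{The class is pulled back from $W$.} Fix a bundle-like metric whose mean curvature form is basic, so that it equals $\kappa_{b}$ (Dominguez). For a leaf closure $\overline{L}$, the component normal to $\overline{L}$ of the mean curvature vector of $\mathcal{F}$ is orthogonal to $T\overline{L}$, so $\kappa_{b}|_{T\overline{L}}$ is exactly the mean curvature form of $\mathcal{F}|_{\overline{L}}$ for the induced bundle-like metric; being the restriction of a closed basic form it is closed and basic on $\overline{L}$, and hence represents the \'{A}lvarez class of $(\overline{L},\mathcal{F}|_{\overline{L}})$. Since $\mathfrak{g}_{0}$ is nilpotent, $G_{0}$ is unimodular, so $H^{q}_{b}(\overline{L},\mathcal{F}|_{\overline{L}})\cong H^{q}(\mathfrak{g}_{0};\mathbb{R})\cong\mathbb{R}$ with $q=\dim\mathfrak{g}_{0}$, and by Masa's criterion $\mathcal{F}|_{\overline{L}}$ is geometrically taut, so $[\kappa_{b}]|_{\overline{L}}=0$ in $H^{1}(\overline{L};\mathbb{R})$. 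Therefore $\gamma\mapsto\int_{\gamma}\kappa_{b}$ annihilates the image of $\pi_{1}(\overline{L})\to\pi_{1}(M)$, i.e.\ the kernel of $\pi_{\ast}\colon\pi_{1}(M)\to\pi_{1}(W)$, and so factors as $\int_{\gamma}[\kappa_{b}]=\bar{\alpha}(\pi_{\ast}\gamma)$ for a unique $\bar{\alpha}\in\Hom(\pi_{1}(W),\mathbb{R})=H^{1}(W;\mathbb{R})$.

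\emph{The modular character and algebraicity.} Molino's commuting sheaf of $(M,\mathcal{F})$ is locally constant with stalk $\mathfrak{g}_{0}$ and constant along each leaf closure, hence descends to a representation $\rho\colon\pi_{1}(W)\to\Aut(\mathfrak{g}_{0})$, and the crux is the formula
\[
\bar{\alpha}(\delta)=\log\bigl|\det\rho(\delta)\bigr|\qquad(\delta\in\pi_{1}(W)).
\]
I would prove this with Molino's linear model around a leaf closure, where a tube around $\overline{L}$ is the bundle with fibre the product of the Lie foliation on $\overline{L}$ with a trivially foliated disc in $W$ and clutching over $\delta$ acting by $\rho(\delta)$ on the $\mathfrak{g}_{0}$-parameter: choosing a bundle-like metric adapted to this model and to a metric on $W$ for which the (geometrically taut) fibration $\pi$ has minimal fibres, the tautness of the Lie foliations on the fibres kills every contribution to $\kappa_{b}$ except the logarithmic derivative of the transverse volume induced from a fixed volume on $\mathfrak{g}_{0}$, whose integral along $\delta$ is $\log|\det\rho(\delta)|$; I expect this identification, together with the bookkeeping of how Dominguez' and Masa's results interact with $\pi$, to be the main obstacle. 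Granting it, fix a fibre $\overline{L}_{0}$ with holonomy group $\Gamma_{0}=h(\pi_{1}(\overline{L}_{0}))\subset G_{0}$: it is finitely generated, torsion free, nilpotent and dense, so by Malcev's theory it is a lattice in a simply connected nilpotent Lie group $\widehat{\Gamma}_{0}$ whose Lie algebra $\widehat{\mathfrak{g}}$ carries a $\mathbb{Q}$-structure making $\Gamma_{0}$ arithmetic, and the inclusion extends to a surjection $\hat{\iota}\colon\widehat{\Gamma}_{0}\twoheadrightarrow G_{0}$, giving $\mathfrak{g}_{0}=\widehat{\mathfrak{g}}/\mathfrak{n}$ with $\mathfrak{n}=\ker d\hat{\iota}$. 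The monodromy $\psi_{\delta}$ of $\pi$ along $\delta$ preserves $\mathcal{F}|_{\overline{L}_{0}}$ and satisfies $\rho(\delta)\circ h=h\circ(\psi_{\delta})_{\ast}$, hence $\rho(\delta)(\Gamma_{0})=\Gamma_{0}$; by functoriality of the Malcev completion this extends to an automorphism of $\widehat{\Gamma}_{0}$ whose differential $\widehat{\rho(\delta)}\in GL(\widehat{\mathfrak{g}})$ is defined over $\mathbb{Q}$, preserves the lattice $\Gamma_{0}$ (so $|\det\widehat{\rho(\delta)}|=1$) and the ideal $\mathfrak{n}$, and induces $\rho(\delta)$ on $\widehat{\mathfrak{g}}/\mathfrak{n}$. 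Thus $|\det\rho(\delta)|=|\det(\widehat{\rho(\delta)}|_{\mathfrak{n}})|^{-1}$, a product of inverses of eigenvalues of $\widehat{\rho(\delta)}$, which are roots of a rational characteristic polynomial and hence algebraic; combining with the earlier steps, $e^{\int_{\gamma}[\kappa_{b}]}$ is a root of $|\det\rho(\pi_{\ast}\gamma^{n})|$ for some $n\geq1$ and is therefore algebraic. It is precisely the nilpotence of $\mathfrak{g}_{0}$ — unimodularity in the previous step, and the Malcev rational structure here — that forces an algebraic rather than an arbitrary real output; for merely solvable $\mathfrak{g}_{0}$ the formula would instead involve the modular function of $G_{0}$, as in Carri\`{e}re's non-taut example on $T^{3}_{A}$, where $e^{\int_{\gamma}[\kappa_{b}]}$ is an eigenvalue of a matrix in $\SL(2,\mathbb{Z})$.
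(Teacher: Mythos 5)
Your overall route is the same as the paper's: pass to the transverse frame bundle and the basic fibration $\pi\colon M\to W$, observe that nilpotence forces unimodularity so the Lie foliation on each fibre is taut and $[\kappa_b]$ is pulled back from $W$, identify $\int_\gamma[\kappa_b]$ with the logarithm of the determinant-type action of the monodromy on the structure algebra, and then get algebraicity from Mal'cev theory because the monodromy preserves the dense, finitely generated holonomy group $\Gamma_0\subset G_0$. Your last step is fine and in fact a little more direct than the paper's (the paper runs an induction on the nilpotency rank, splitting off $[G,G]$ and the abelianization, whereas you invoke rationality of the induced automorphism of the Mal'cev completion with respect to the $\mathbb{Q}$-structure of the lattice and take eigenvalues; both work, and your observation that $\det$ of a lattice-preserving automorphism restricted to an invariant ideal is algebraic suffices).

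The genuine gap is the formula $\bar{\alpha}(\delta)=\log\lvert\det\rho(\delta)\rvert$, which you explicitly "grant" after a heuristic about Molino's linear model. This is not a bookkeeping matter: it is the analytic heart of the theorem, and the paper devotes all of Section 2 to it. Concretely, after pulling back over a loop $\delta$ one is on a mapping torus of a foliated diffeomorphism $f$ of $(N,\mathcal{G})$, and one must (i) use Masa's duality and unimodularity to get a metric $g_0$ on $N$ with taut characteristic form $\chi_0$ and $d_{1,0}\chi_0=0$; (ii) isotope $f$ (a Moser-type argument, as in Ghys) to $f_1$ with $f_1^{*}\chi_0=c\,\chi_0$ exactly, where $c$ is the eigenvalue of $f^{*}$ on $E_2^{0,k}$; (iii) use the pairing $E_2^{0,k}\times E_2^{l,0}\to H^{k+l}(N;\mathbb{R})$ to see the transverse volume scales by $1/c$, so that the glued bundle-like metric makes the fibres of $\pi$ minimal and $\int_\pi\kappa$ a closed $1$-form on $S^1$; and (iv) interpret $t\mapsto[\chi|_{\pi^{-1}(t)}]$ as a section of a flat line bundle of $E_2^{0,k}$-terms, computing its holonomy once as $f^{*}$ and once, via Rummler's formula, as $e^{\int_{S^1}\kappa_b}$. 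Only after this does the period of $[\kappa_b]$ become the eigenvalue to which your Mal'cev argument applies; your sketch ("the tautness of the Lie foliations on the fibres kills every contribution to $\kappa_b$ except the logarithmic derivative of the transverse volume") names the expected outcome but supplies none of these steps, and steps (ii)–(iii) are exactly where the naive choice of metric fails (a generic bundle-like metric does not make $\int_\pi\kappa$ closed, nor the fibres of $\pi$ minimal). A secondary, smaller gap: your claim that $\kappa_b$ restricted to a leaf closure is the Álvarez class of the restricted foliation (used to factor the period map through $\pi_1(W)$) is also asserted rather than proved, though it is comparatively minor and known.
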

\noindent
Theorem~\ref{expalg} is shown by computation of \'{A}lvarez classes in terms of the  holonomy of the basic fibration in Section 2 and application of Mal'cev theory in Section 3.

We state two corollaries of Theorem~\ref{expalg}. Let $M$ be a closed manifold whose fundamental group has polynomial growth. Then the structure Lie algebra of every Riemannian foliation on $M$ is nilpotent according to Carri\`{e}re~\cite{Car}. By Theorem~\ref{expalg}, the \'{A}lvarez classes of Riemannian foliations on $M$ are contained in a countable subset of $H^1(M;\mathbb{R})$ which is independent of foliations. On the other hand, if we have a smooth family $\{\mathcal{F}^{t}\}_{t \in [0,1]}$ of Riemannian foliation on $M$, their \'{A}lvarez classes varies continuously in $H^1(M;\mathbb{R})$ as shown in~\cite{Noz}. Hence we have the following corollary:

\begin{cor}\label{alvinv}
Let $M$ be a closed manifold whose fundamental group has polynomial growth and $\{\mathcal{F}^{t}\}_{t \in [0,1]}$ be a smooth family of Riemannian foliation on $M$ over $[0,1]$. Then we have $[\kappa_{b}^{t}]=[\kappa_{b}^{t'}]$ in $H^1(M;\mathbb{R})$ for any $t$ and $t'$ in $[0,1]$ where $[\kappa_{b}^{t}]$ is the \'{A}lvarez class of $(M,\mathcal{F}^{t})$. 
\end{cor}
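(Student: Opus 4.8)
The plan is to deduce the corollary by feeding Theorem~\ref{expalg} into the continuity result of~\cite{Noz} and invoking connectedness of $[0,1]$. First I would apply Carri\`{e}re's theorem~\cite{Car}: since $\pi_1(M)$ has polynomial growth, the structure Lie algebra of each $\mathcal{F}^t$ is nilpotent, so Theorem~\ref{expalg} applies to every member of the family. Thus, for each fixed $\gamma \in \pi_1(M)$ and each $t \in [0,1]$, the positive real number $e^{\int_{\gamma} [\kappa_b^t]}$ is algebraic.

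Next I would fix $\gamma \in \pi_1(M)$ and consider the function $f_\gamma \colon [0,1] \to \mathbb{R}$ given by $f_\gamma(t) = \int_{\gamma} [\kappa_b^t]$, where the integral denotes the pairing of the de Rham class $[\kappa_b^t] \in H^1(M;\mathbb{R})$ with the image of $\gamma$ in $H_1(M;\mathbb{Z})$. By~\cite{Noz} the map $t \mapsto [\kappa_b^t]$ is continuous into $H^1(M;\mathbb{R})$, and pairing with a fixed homology class is a continuous linear functional, so $f_\gamma$ is continuous; hence $t \mapsto e^{f_\gamma(t)}$ is a continuous map from the connected interval $[0,1]$ into the countable set $\overline{\mathbb{Q}} \cap \mathbb{R}$ of real algebraic numbers. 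A continuous map from a connected space to a countable subset of $\mathbb{R}$ is constant, so $e^{f_\gamma(t)}$, and therefore $f_\gamma(t)$ itself, is independent of $t$.

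Finally I would upgrade this to an equality of cohomology classes. The image of $\pi_1(M)$ under the Hurewicz homomorphism generates $H_1(M;\mathbb{Z})$, hence spans $H_1(M;\mathbb{R})$; since $H^1(M;\mathbb{R})$ is dual to $H_1(M;\mathbb{R})$, a class in $H^1(M;\mathbb{R})$ is determined by the numbers $\int_{\gamma}(-)$ as $\gamma$ ranges over $\pi_1(M)$. As each of these is independent of $t$ by the previous step, we conclude $[\kappa_b^t] = [\kappa_b^{t'}]$ for all $t, t' \in [0,1]$.

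The argument involves no serious obstacle — the substantive work is already contained in Theorem~\ref{expalg} — so the only point requiring care is bookkeeping: one must check that ``the integral along a closed path'' appearing in Theorem~\ref{expalg} is literally the homological pairing used in the continuity statement of~\cite{Noz}, and that the continuity of $t \mapsto [\kappa_b^t]$ is meant in the usual topology of $H^1(M;\mathbb{R})$. Granting this, I would also remark, in passing, that since $\mathcal{F}^t$ is geometrically taut precisely when $[\kappa_b^t] = 0$, the same reasoning yields the deformation invariance of geometric tautness announced in the introduction.
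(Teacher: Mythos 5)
Your proposal is correct and is essentially the argument the paper itself gives: Carri\`{e}re's nilpotence result feeds Theorem \ref{expalg}, which forces each period of the \'{A}lvarez class into (the logarithm of) a countable set, and the continuity of $t \mapsto [\kappa_b^t]$ from \cite{Noz} together with connectedness of $[0,1]$ then forces constancy. Your period-by-period formulation, with the Hurewicz/duality remark to pass from constant periods to equality of classes, is just a slightly more explicit bookkeeping of the same countability-plus-continuity argument sketched in the introduction.
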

Since $(M,\mathcal{F}^{t})$ is geometrically taut if and only if the \'{A}lvarez class of $(M,\mathcal{F}^{t})$ vanishes according to \'{A}lvarez L\'{o}pez~\cite{Alv}, we have the following corollary of Corollary~\ref{alvinv}:

\begin{cor}\label{definv}
Let $M$ be a closed manifold whose fundamental group has polynomial growth and $\{\mathcal{F}^{t}\}_{t \in [0,1]}$ be a smooth family of Riemannian foliation on $M$. Then one of the following is true:
\begin{enumerate}
\item $(M,\mathcal{F}^{t})$ is geometrically taut for every $t$ in $[0,1]$.
\item $(M,\mathcal{F}^{t})$ is not geometrically taut for every $t$ in $[0,1]$.
\end{enumerate}
\end{cor}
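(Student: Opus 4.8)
The plan is to obtain Corollary \ref{definv} as an immediate consequence of Corollary \ref{alvinv} together with the tautness criterion of \'{A}lvarez-L\'{o}pez \cite{Alv}. First I would recall that criterion: a Riemannian foliation $(M,\mathcal{F})$ on a closed manifold is geometrically taut precisely when its \'{A}lvarez class $[\kappa_b] \in H^1(M;\mathbb{R})$ vanishes. Hence the dichotomy to be proved is, in effect, a dichotomy about whether $[\kappa_b^t]$ is zero or nonzero, and the whole point will be that this zero/nonzero status cannot change along the family.

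Next I would verify that the hypotheses of Corollary \ref{alvinv} are met by the given family. Since $\pi_1(M)$ has polynomial growth, Carri\`{e}re \cite{Car} shows that the structure Lie algebra of every Riemannian foliation on $M$ is nilpotent; in particular this holds for each $\mathcal{F}^t$. Therefore Corollary \ref{alvinv} applies and produces a single class $\alpha \in H^1(M;\mathbb{R})$ with $[\kappa_b^t] = \alpha$ for all $t \in [0,1]$.

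Finally I would split into cases according to whether $\alpha = 0$. If $\alpha = 0$, then $[\kappa_b^t] = 0$ for every $t$, so by \cite{Alv} each $(M,\mathcal{F}^t)$ is geometrically taut, which is alternative (1). If $\alpha \neq 0$, then $[\kappa_b^t] \neq 0$ for every $t$, so no $(M,\mathcal{F}^t)$ is geometrically taut, which is alternative (2). These two alternatives are exhaustive, so the corollary follows. I expect essentially no obstacle here, since all the substantive content has already been packaged into Theorem \ref{expalg} and hence Corollary \ref{alvinv}, and into the cited results of \'{A}lvarez-L\'{o}pez and Carri\`{e}re. The only point requiring a word of care is that Corollary \ref{alvinv} must be applied uniformly, i.e.\ every member $\mathcal{F}^t$ of the family (not merely the endpoints) must be a Riemannian foliation with nilpotent structure algebra; this is immediate from the standing assumption that $\{\mathcal{F}^t\}$ is a smooth family of Riemannian foliations on $M$ combined with Carri\`{e}re's theorem applied at each parameter $t$.
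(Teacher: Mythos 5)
Your proposal is correct and matches the paper's own argument: the paper also deduces Corollary \ref{definv} directly from Corollary \ref{alvinv} combined with the \'{A}lvarez-L\'{o}pez criterion that geometric tautness is equivalent to the vanishing of the \'{A}lvarez class. No further comment is needed.
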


If the dimension of $\mathcal{F}$ is $1$, the structure Lie algebra of $\mathcal{F}$ is always abelian according to Caron-Carri\`{e}re~\cite{CaCa} or Carri\`{e}re~\cite{Car}. Hence the conclusion of Corollary~\ref{alvinv} and Corollary~\ref{definv} follow from Theorem~\ref{expalg} without the assumption on the growth of the fundamental group of $M$ for the cases where $\mathcal{F}$ is $1$-dimensional.

Note that we can show Theorem~\ref{expalg} by more direct computation in~\cite{Noz} if the dimension of $\mathcal{F}$ is $1$, using a theorem of Caron-Carri\`{e}re~\cite{CaCa} which claims $1$-dimensional Lie foliations with dense leaves are linear flows on tori with irrational slopes and computation of the mapping class group of the group of diffeomorphisms preserving a linear foliation with dense leaves on tori by Molino and Sergiescu~\cite{MoSe}.

The author expresses his gratitude to Jes\'{u}s Antonio \'{A}lvarez L\'{o}pez, Jos\'{e} Royo Prieto Ignacio and Steven Hurder. The conversation with \'{A}lvarez L\'{o}pez and Royo Prieto on \'{A}lvarez classes gave the author a definite clue to carry out the computation of \'{A}lvarez classes in Section 2. The idea on conditions for the growth of the fundamental group of the ambient manifold came from the conversation with Hurder.

\section{Computation of \'{A}lvarez classes in terms of the basic fibration}

We prepare the notation to state the main result in this section. Let $(M,\mathcal{F})$ be a closed manifold with an orientable transversely parallelizable foliation. Let $\pi \colon M \longrightarrow W$ be the basic fibration of $(M,\mathcal{F})$. Fix a point $x$ on $M$. We denote a fiber of $\pi$ which contains $x$ by $N$ and the restriction of $\mathcal{F}$ to $N$ by $\mathcal{G}$. Then $(N,\mathcal{G})$ is a Lie foliation by Molino's structure theorem. We denote the dimension and codimension of $(N,\mathcal{G})$ by $k$ and $l$ respectively. Let $\mathfrak{g}$ be the structure algebra of $(N,\mathcal{G})$, which has dimension $l$. Throughout Section 2, we assume the unimodularity of $\mathfrak{g}$, that is,
\begin{equation}\label{unimod}
H^{l}(\mathfrak{g}) \cong \mathbb{R}.
\end{equation}
Note that assumption~\eqref{unimod} is satisfied if $\mathfrak{g}$ is nilpotent and may not be satisfied if $\mathfrak{g}$ is solvable.

For the basic definition of the spectral sequences of foliated manifolds, we refer to a paper by Kamber and Tondeur~\cite{KaTo2}. Let $E_{2}^{0,k}$ be the $(0,k)$-th $E_2$ term of the spectral sequence of $(N,\mathcal{G})$. Then the dimension of $E_{2}^{0,k}$ is $1$ by the assumption~\eqref{unimod}, since we have 
\begin{equation}
H^{l}(\mathfrak{g}) \cong H_{B}^{l}(N/\mathcal{G}) \cong E_{2}^{0,k}
\end{equation}
where the first isomorphism follows from the denseness of the leaves of $(N,\mathcal{G})$ and the second isomorphism follows from the duality theorem of Masa in~\cite{Mas}. Hence $\Aut (E_{2}^{0,k})$ is canonically identified with $\mathbb{R}-\{0\}$. We denote the orientation preserving automorphism group of $E_{2}^{0,k}$ by $\Aut_{+} (E_{2}^{0,k})$, which is identified with the set of positive real numbers $\mathbb{R}_{>0}$.

Let $\Diff(N,\mathcal{G})$ be the group of diffeomorphisms on $N$ which map each leaf of $\mathcal{G}$ to a leaf of $\mathcal{G}$ and denote its mapping class group in the $C^{\infty}$ topology by $\pi_{0}(\Diff(N,\mathcal{G}))$. We have a canonical action $\Phi \colon \pi_{0}(\Diff(N,\mathcal{G})) \longrightarrow \Aut(E_{2}^{0,k})$ defined in the following way: Let $H^{k}(\mathcal{G})$ be the $k$-th leafwise cohomology group of $(N,\mathcal{G})$, which is identified with $E_{1}^{0,k}$ in the spectral sequence of $(N,\mathcal{G})$. $E_{2}^{0,k}$ is identified with the kernel of $d_{1,0} \colon H^{k}(\mathcal{G}) \longrightarrow E_{1}^{1,k}$ where $d_{1,0}$ is the map induced on $E_1$ terms from the composition of the de Rham differential and the projection $C^{\infty}(\wedge^{k+1} T^{*}N) \longrightarrow C^{\infty}((T\mathcal{G}^{\perp})^{*} \otimes \wedge^{k} T^{*}\mathcal{G})$ determined by a Riemann metric $g$ of $(N,\mathcal{G})$ (See~\cite{KaTo2}.). $\Diff(N,\mathcal{G})$ acts to $E_{2}^{0,k}$, since $\Diff(N,\mathcal{G})$ acts to the leafwise cohomology group by pulling back leafwise volume forms and this action preserves $E_{2}^{0,k}$. This action descends to an action of $\pi_{0}(\Diff(N,\mathcal{G}))$, since the action of the identity component of $\Diff(N,\mathcal{G})$ to the leafwise cohomology group is trivial according to the integrable homotopy invariance of leafwise cohomology shown by El Kacimi Alaoui~\cite{ElK}.

We show the following proposition which computes the period of the \'{A}lvarez class $[\kappa_b]$ of $(M,\mathcal{F})$ in terms of the holonomy of the basic fibration:

\begin{prop}\label{holonomyformula}
The diagram
\begin{equation}\label{diag1}
\xymatrix{ \pi_1(M,x) \ar[rr]^{\int [\kappa_b]} \ar[d]_{\pi_{*}} &  & \mathbb{R} \\
           \pi_1(W,\pi(x)) \ar[r]^{hol_{\pi}} & \pi_{0}(\Diff(N,\mathcal{G})) \ar[r]^{\Phi} & \Aut_{+}(E_{2}^{0,k}) \ar[u]_{\log}}
\end{equation}
is commutative where $\int [\kappa_b]$ is the period map of $[\kappa_b]$, $hol_{\pi}$ is the holonomy map of the basic fibration $\pi \colon M \longrightarrow W$ , $\log$ is defined through the identification of $\Aut(E_{2}^{0,k})$ with $\mathbb{R}_{>0}$ and $\Phi$ is the canonical action described above.
\end{prop}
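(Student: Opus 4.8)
The plan is to unwind both sides of the diagram against the structure of the basic fibration and reduce the commutativity to a statement about how the holonomy diffeomorphisms act on leafwise volume. First I would recall that the Álvarez class $[\kappa_b]$ is represented by the basic component $\kappa_b$ of the mean curvature form of $\mathcal{F}$ with respect to a suitable bundle-like metric $g$, and that $\kappa_b$ is closed with $d\kappa_b = 0$; this is Álvarez-López's theorem. The key geometric input is that one may choose $g$ so that it is compatible with the basic fibration $\pi\colon M\to W$ in the sense that $g$ restricts on each fiber $N$ to a bundle-like metric for $\mathcal{G}$ and the fibration $\pi$ is Riemannian. Then $\int_\gamma[\kappa_b]$ for a loop $\gamma$ in $M$ depends only on the homology class, and since the fiber $N$ is minimal in $M$ (the leaves of $\mathcal{G}$ being the leaves of $\mathcal{F}$ inside the fiber, and the structure algebra being unimodular forces the mean curvature of $\mathcal{G}$ in $N$ to be transversely exact — it has no basic part), the restriction of $\kappa_b$ to $N$ is cohomologically trivial. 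Hence $\int_\gamma[\kappa_b]$ factors through $\pi_*\colon \pi_1(M,x)\to\pi_1(W,\pi(x))$, which is exactly the left vertical map of the diagram.

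Next I would identify the resulting class on $W$. Having factored through $\pi_1(W,\pi(x))$, the value $\int_{\pi_*\gamma}$ of the induced functional is computed by parallel transport along the Bott (Ehresmann) connection of the Riemannian submersion $\pi$: going around a loop $\delta$ in $W$ produces the holonomy diffeomorphism $hol_\pi(\delta)\in\Diff(N,\mathcal{G})$, and the change in $\kappa_b$ measured along $\delta$ is precisely the logarithmic change of the leafwise volume of $(N,\mathcal{G})$ under $hol_\pi(\delta)$. More precisely, the mean curvature form tracks the variation of the leafwise volume form $\chi_{\mathcal{G}}$ under the flow transverse to the fibers; integrating the basic $1$-form $\kappa_b$ over $\delta$ yields $\log$ of the factor by which the pulled-back leafwise cohomology class in $E_2^{0,k}$ is scaled, because $E_2^{0,k}$ is one-dimensional (assumption \eqref{unimod}) and $hol_\pi(\delta)^*$ acts on it by a positive scalar, namely the image under $\Phi\circ hol_\pi$. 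This is the content of the lower path of the diagram composed with $\log$.

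The main obstacle — and where the real work lies — is making the second step precise: relating the de Rham period $\int_\delta\kappa_b$ to the scaling constant $\Phi(hol_\pi(\delta))\in\Aut_+(E_2^{0,k})\cong\mathbb{R}_{>0}$. Concretely, I would take a smooth family of local sections realizing parallel transport of the fibers along a path representing $\delta$, pull back a fixed leafwise volume form, and differentiate: the logarithmic derivative of this family of volume forms, integrated against the basic mean curvature form of $\mathcal{G}$ inside the moving fiber, must be shown to equal $\kappa_b$ evaluated on the tangent to $\delta$, up to an exact leafwise term that dies in $E_2^{0,k}$. Here one uses that $\kappa_b$ is basic (so it descends along $\pi$ on the relevant locus), that it is closed (so the period is well-defined on homology), and the fact — from the Kamber–Tondeur spectral sequence description recalled before the proposition — that $d_{1,0}$ on $E_1^{0,k}=H^k(\mathcal{G})$ has one-dimensional kernel on which $\pi_0(\Diff(N,\mathcal{G}))$ acts by scalars. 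The remaining points — that the construction is independent of the choice of bundle-like metric (two choices differ by an exact basic $1$-form, by Álvarez-López), independent of the representative path, and compatible with concatenation of loops so that the diagram commutes as a diagram of group homomorphisms into $(\mathbb{R},+)$ — are then routine once the infinitesimal identity is established.
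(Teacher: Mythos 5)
Your overall strategy is the same as the paper's: kill the fiber direction using unimodularity (tautness of $(N,\mathcal{G})$ via Masa duality), and then identify $e^{\int_\delta \kappa_b}$ with the scalar by which the holonomy of $\pi$ acts on the one-dimensional space $E_2^{0,k}$. However, the step you yourself flag as ``where the real work lies'' is exactly the content of the proposition, and you do not carry it out: you assert that the logarithmic variation of the pulled-back leafwise volume class ``must be shown to equal'' $\kappa_b$ along the loop, ``up to an exact leafwise term that dies in $E_2^{0,k}$,'' but no argument is given for this infinitesimal identity, and it is not formal. In particular, for an arbitrary ``compatible'' bundle-like metric the characteristic form $\chi^t$ of a fiber need not be $d_{1,0}^t$-closed, so its leafwise class need not even define a section of the $E_2^{0,k}$-bundle on which you want to compute parallel transport; and your parenthetical claim that the fibers of $\pi$ are minimal in $M$ is not automatic for such a metric --- it is one of the things that has to be arranged, and it is needed precisely so that fiber integration of the mean curvature recovers $\kappa_b$ (otherwise $d\,vol_\pi$ has a purely fiber-tangential component and $\int_\pi\kappa$ is neither closed nor comparable to $\kappa_b$).

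The paper closes this gap by a concrete construction you would need to supply. After reducing to $W=S^1$, it forms the real line bundle $E\to S^1$ with fibers $E_2^{0,k}$ of $(\pi^{-1}(t),\mathcal{F}|_{\pi^{-1}(t)})$ and the metric-independent connection $\nabla s=\int_\pi(d_{1,0}s)$, and shows by a product-metric trivialization over $[0,1]$ that its holonomy is $f^*=\Phi(hol_\pi)$. Then it builds a special bundle-like metric: Masa duality plus unimodularity give a taut metric $g_0$ on $(N,\mathcal{G})$ with characteristic form $\chi_0$; since $[\chi_0]$ spans $E_2^{0,k}$, $f^*[\chi_0]=c[\chi_0]$, and Moser's argument isotopes $f$ to $f_1$ with $f_1^*\chi_0=c\chi_0$ on the nose; the naturality of the pairing $E_2^{0,k}\times E_2^{l,0}\to H^{k+l}(N;\mathbb{R})$ forces $f_1$ to scale the basic transverse volume by $1/c$, hence to preserve the total volume form, so the mapping-torus metric has minimal fibers and fiberwise $d_{1,0}$-closed characteristic forms. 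Only then does Rummler's formula give $\nabla[\chi]=-(\int_\pi\kappa)\otimes[\chi]$ with $\int_\pi\kappa$ closed and equal to $\kappa_b$ in the $S^1$-direction, which yields holonomy $e^{\int_{S^1}\kappa_b}$ and hence the commutativity. Without the normalization of $f$ to $f_1$, the minimality of the fibers, and the $d_{1,0}$-closedness of the fiberwise characteristic forms, your proposed computation of the ``logarithmic change of leafwise volume'' does not go through, so as written the proposal is an outline rather than a proof.
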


\begin{proof}
To show the commutativity of the diagram~\eqref{diag1} for an element $[\gamma]$ of $\pi_1(M,x)$ which is represented by a smooth path $\gamma$, it suffices to show the case of $W=S^1$ pulling back the fibration $\pi$ by $\gamma$. By the assumption~\eqref{unimod}, the \'{A}lvarez class of $(N,\mathcal{G})$ cannot be nontrivial. Hence the restriction of \'{A}lvarez class of $(M,\mathcal{F})$ to a fiber of $\pi$ is zero. We will compute the integration of \'{A}lvarez class of $(M,\mathcal{F})$ along a path which gives a generator of $\pi_1(S^1,\pi(x))$. We denote the holonomy of the $(N,\mathcal{G})$-bundle $\pi$ over $S^1$ along a path which gives a generator of $\pi_1(S^1,\pi(x))$ by $f$ and its action on $E_{2}^{0,k}$ by $f^*$. We write $\overline{\mathcal{F}}$ for the foliation defined by the fibers of $\pi$.

We fix a bundle-like metric $g'$ on $(M,\mathcal{F})$. Let $E$ be the vector bundle of rank $1$ over $S^1$ whose fiber $E_t$ over $t$ in $S^1$ is the $(0,k)$-th $E_2$ term of the spectral sequence of $(\pi^{-1}(t),\mathcal{F}|_{\pi^{-1}(t)})$. We define an affine connection $\nabla$ on $E$ by
\begin{equation} 
\nabla s = \int_{\pi} (d_{1,0} s) 
\end{equation}
for $s$ in $C^{\infty}(E)$ where $d_{1,0}$ is the map induced on $E_1$ terms from the composition of the de Rham differential and the projection $C^{\infty}(\wedge^{k+1} T^{*}M) \longrightarrow C^{\infty}((T\mathcal{F}^{\perp})^{*} \otimes \wedge^{k} T^{*}\mathcal{F})$ determined by a Riemann metric $g$ of $(M,\mathcal{F})$, and $\int_{\pi}$ is the integration on fibers on $\pi$ with respect to the first component of $C^{\infty}(T^{*}M) \otimes C^{\infty}(E)$ using the fiberwise volume form $vol_{\pi}$ of $\pi$ determined by $g'$ which is defined by $\int_{\pi} (\alpha \otimes h[\chi]) = \big(\int_{\pi} h\alpha \wedge vol_{\pi} \big) \otimes [\chi]$ for $\alpha$ in $C^{\infty}(T^{*}M)$ and $h$ in $C^{\infty}(M)$. The Rummler's formula
\begin{equation}\label{Rum}
d_{1,0} \chi = -\kappa \wedge \chi
\end{equation}
in~\cite{Rum} implies $\nabla$ is an connection on $E$. Note that $d_{1,0}$ coincides with the differential on $E_1$ terms of the spectral sequence of $(M,\mathcal{F})$ which is determined only by $\mathcal{F}$ and hence $\nabla$ is independent of the metric $g'$.

The connection $\nabla$ is flat, since every connection on a vector bundle over $S^1$ is flat. The holonomy of $(E,\nabla)$ which corresponds to a generator of $\pi_1(S^1,\pi(x))$ is shown to be equal to $f^{*}$ in the following way: We pull back the $(N,\mathcal{G})$ bundle $\pi$ by the canonical map $\iota \colon [0,1] \longrightarrow [0,1]/ \{0\} \sim \{1\}=S^1$ and denote the total space of $\iota^{*}\pi$ by $(M',\mathcal{F}')$. Fix a trivialization $(M',\mathcal{F}') \cong (N,\mathcal{G}) \times [0,1]$ as a $(N,\mathcal{G})$ bundle, then we have an induced trivialization of $\iota^{*}E$. Since $\iota^{*}\nabla$ is independent of the metric, we can assume that $\iota^{*}\nabla$ is defined by a product metric. Then the parallel section of $(\iota^{*}E,\iota^{*}\nabla)$ is the constant sections with respect to the trivialization. Since $(E,\nabla)$ is obtained by identifying the boundaries of $(\iota^{*}E,\iota^{*}\nabla)$ by $f^*$, the holonomy of $(E,\nabla)$ is equal to $f^{*}$.

We will show that the holonomy of $(E,\nabla)$ which corresponds to a generator of $\pi_1(S^1,\pi(x))$ is equal to $e^{\int_{S^1} \kappa_b}$ where the holonomy of $(E,\nabla)$ is regarded as a real number. For this purpose, we construct a bundle-like metric $g$ on $(M,\mathcal{F})$ such that each leaf of $\overline{\mathcal{F}}$ is a minimal manifold of $(M,g)$ and the leafwise volume form $\chi^{t}$ of $(\pi^{-1}(t),\mathcal{F}|_{\pi^{-1}(t)})$ determined by $g$ satisfies $d_{1,0}^{t} \chi^{t}=0$ where $d_{1,0}^{t}$ is the transverse component of de Rham differential of $({\pi^{-1}(t)},\mathcal{F}|_{{\pi^{-1}(t)}})$ defined in the same way as $d_{1,0}$ for $(N,\mathcal{G})$. By the duality theorem of Masa in~\cite{Mas} and the assumption~\eqref{unimod}, we have a bundle-like metric $g_0$ on $(N,\mathcal{G})$ such that each leaf of $\mathcal{G}$ is a minimal submanifold of $(N,g_0)$. Note that the leafwise cohomology class $[\chi_0]$ of the characteristic form $\chi_0$ of $(N,\mathcal{G},g_0)$ is an eigenvector of $f^*$, since $[\chi_0]$ is a generator of $E_{2}^{0,k}$ and $f^*$ preserves $E_{2}^{0,k}$. Hence we can write $f^*[\chi_0]=c[\chi_0]$ for some real number $c$. By the Moser's argument in~\cite{Ghy}, we can isotope $f$ to $f_1$ in $\Diff(N,\mathcal{G})$ so that $f^{*}_{1}\chi_0=c\chi_0$. Let $\omega_0$ be a basic transverse volume form of $(N,\mathcal{G})$. Then we have $f^{*}_{1}\omega_0=b\omega_0$ for some real number $b$, the leaves of $(N,\mathcal{G})$ are dense. Since $f_{1}$ induces the identity map on $H^{k+l}(N ; \mathbb{R})$ and a pairing 
\begin{equation}\label{pair}
E_{2}^{0,k} \times E_{2}^{l,0} \longrightarrow E_{2}^{l,k}=H^{k+l}(N ; \mathbb{R})
\end{equation}
is natural with respect to $\Diff(N,\mathcal{G})$, we have $d=\frac{1}{c}$. Hence we have $f^{*}_{1} (\chi_0 \wedge \omega_0)=\chi_0 \wedge \omega_0$. We define a bundle-like metric $g$ of $(M,\mathcal{F})$ by $g= \rho(t)g_{0} +(1-\rho(t))f^{*}_{1}g_{0} + dt \otimes dt$ where $\rho$ is a smooth function on $[0,1]$ which satisfies $\rho(t)=0$ near $0$ and $\rho(t)=1$ near $1$. We denote the characteristic forms of $(M,\mathcal{F},g)$ by $\chi$. Then we have $\chi = \rho(t)\chi_{0} +(1-\rho(t))f^{*}_{1}\chi_{0}$. By the Rummler formula, we have $d_{1,0}\chi_0=0$ and hence $d_{1,0}^{t}(\chi|_{\pi^{-1}(t)})=0$ for every $t$ in $S^1$. Note that each leaf of $\overline{\mathcal{F}}$ is a minimal submanifold of $(M,g)$, since $f_1$ preserves the volume form of $(N,g_0)$.

We calculate the holonomy of $(E,\nabla)$ using $g$. We denote the map which corresponds the leafwise cohomology class $[\chi|_{\pi^{-1}(t)}]$ to $t$ in $S^1$ by $[\chi]$. Then $[\chi]$ is a global section of $E$, since $d_{1,0}^{t} \chi|_{\pi^{-1}(t)}=0$ is satisfied by the construction of $\chi$. By the Rummler's formula~\eqref{Rum}, we have 
\begin{equation}\label{conn}
\nabla [\chi] = -\Big(\int_{\pi} \kappa \Big) \otimes [\chi]. 
\end{equation}
We show that $\int_{\pi} \kappa$ is a closed $1$-form on $S^1$ which satisfies $\kappa_b(\frac{\partial}{\partial t})=\int_{\pi} \kappa(\frac{\partial}{\partial t})$ on $S^1$. By the minimality of the fibers of $\pi$ with respect to $g$ and Rummler's formula for $(M,\overline{\mathcal{F}})$, $dvol_{\pi}$ has no component which has $m$-form tangent to the fibers of $\pi$ where $m$ is the dimension of the fibers of $\pi$. Hence we have 

\begin{equation}
\begin{array}{rl}
d\int_{\pi} \kappa & = d\int_{\pi} \kappa_b \\
& =\int_{\pi} \Big(d\kappa_b \wedge vol_{\pi} - \kappa_b \wedge dvol_{\pi} \Big) \\
& = \int_{\pi} \Big(d\kappa_b \wedge vol_{\pi} - \kappa_b \wedge dvol_{\pi} \Big) \\
& = \int_{\pi} \Big(d\kappa_b \wedge vol_{\pi}\Big) \\
& = \int_{\pi} d\kappa_b.
\end{array}
\end{equation}
Since $\kappa_b$ is closed by Corollary~3.5 of \'{A}lvarez L\'{o}pez~\cite{Alv}, we have $d\int_{\pi} \kappa=0$. $\kappa_b(\frac{\partial}{\partial t})=\int_{\pi} \kappa(\frac{\partial}{\partial t})$ is clear by the definition of $\kappa_b$.

Hence $\nabla$ is a flat connection defined by a closed form $\int_{\pi} \kappa$ and we can show the holonomy of $(E,\nabla)$ is equal to $e^{\int_{S^1} \int_{\pi} \kappa}=e^{\int_{S^1} \kappa_b}$ in a standard way.
\end{proof}

\section{Application of Mal'cev Theory}

Let $(M,\mathcal{F})$ be a closed manifold with a Riemannian foliation with nilpotent structure Lie algebra. Since the \'{A}lvarez class of $(M,\mathcal{F})$ is defined by the integration along fibers of the \'{A}lvarez class of $(M^{1},\mathcal{F}^{1})$ where $M^{1}$ is the transverse orthonormal frame bundle of $(M,\mathcal{F})$ and $\mathcal{F}^{1}$ is the lift of $\mathcal{F}$, which is transversely parallelizable, Theorem~\ref{expalg} is reduced to the case where $(M,\mathcal{F})$ is transversely parallelizable. Moreover we can assume the orientability of $\mathcal{F}$, since a square root of an algebraic number is algebraic. To show Theorem~\ref{expalg} in the cases where $(M,\mathcal{F})$ is orientable and transversely parallelizable, it suffices to show the following Proposition~by Proposition~\ref{holonomyformula}:  

\begin{prop}\label{algebraicaction}
Let $(N,\mathcal{G})$ be a closed manifold with a Lie foliation with nilpotent structure Lie algebra of which every leaf is dense in $N$. Then the image of $\Phi \colon \pi_{0}(\Diff(N,\mathcal{G})) \longrightarrow \Aut(E_{2}^{0,k})$ is contained in the set of algebraic numbers where $\Aut(E_{2}^{0,k})$ is canonically identified with $\mathbb{R}-\{0\}$.
\end{prop}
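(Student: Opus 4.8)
The plan is to reduce the statement to a concrete linear-algebra problem over a number field by exploiting Mal'cev theory for the nilpotent structure Lie algebra $\mathfrak{g}$. First I would recall Molino's description of a Lie $\mathfrak{g}$-foliation $(N,\mathcal{G})$ with dense leaves: the developing map gives a homomorphism $h\colon\pi_1(N)\to G$ with dense image $\Gamma$, where $G$ is the simply connected nilpotent Lie group with Lie algebra $\mathfrak{g}$, and $N$ is recovered from the universal cover $\widetilde N$ together with the $\Gamma$-action. The $(0,k)$-term $E_2^{0,k}$ is canonically identified (as in the excerpt, via $H^l(\mathfrak{g})\cong H^l_B(N/\mathcal{G})\cong E_2^{0,k}$) with $\wedge^l\mathfrak{g}^*$, the top exterior power, which is one-dimensional. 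So the action $\Phi$ of a class $[f]\in\pi_0(\Diff(N,\mathcal{G}))$ on $E_2^{0,k}$ is a single nonzero real scalar, and I must show it is algebraic.

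The key step is to understand how a leaf-preserving diffeomorphism $f$ acts on the transverse data. Since $f$ maps leaves to leaves, it induces an automorphism of the transverse structure; concretely, after lifting to $\widetilde N$, $f$ conjugates the $\Gamma$-action, hence normalizes $\Gamma$ inside $\Diff(\widetilde N)$, and its transverse part is realized by an automorphism $\varphi$ of the group $G$ that carries $\Gamma$ to a subgroup commensurable with (indeed, conjugate to) $\Gamma$. Here Mal'cev theory enters decisively: a dense finitely generated subgroup $\Gamma\subset G$ need not be a lattice, but by rigidity of the rational structure, $\mathrm{Aut}(G)$ that preserves $\Gamma$ up to the relevant equivalence must preserve the $\mathbb{Q}$-structure on $\mathfrak{g}$ determined by $\log\Gamma$ — more precisely, $\varphi$ descends to a $\mathbb{Q}$-rational (in fact, $\overline{\mathbb{Q}}$-rational) automorphism of $\mathfrak{g}$ with respect to a suitable basis coming from a Mal'cev basis of the $\mathbb{Q}$-span of $\log\Gamma$. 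The scalar $\Phi([f])$ is then precisely $\det(d\varphi)$ acting on $\wedge^l\mathfrak{g}^*$, i.e. the reciprocal of the determinant of $d\varphi$ on $\mathfrak{g}$, which is an algebraic number because $d\varphi$ is represented by an algebraic matrix.

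To make this rigorous I would proceed as: (1) fix a Mal'cev basis of the $\mathbb{Q}$-Lie algebra $\mathfrak{g}_{\mathbb{Q}}:=\mathbb{Q}\langle\log\Gamma\rangle$, noting $\mathfrak{g}=\mathfrak{g}_{\mathbb{Q}}\otimes_{\mathbb{Q}}\mathbb{R}$ since $\Gamma$ is dense; (2) show that any $[f]\in\pi_0(\Diff(N,\mathcal{G}))$ induces, via its action on the transverse structure and hence on the developing data, a Lie algebra automorphism of $\mathfrak{g}$ which restricts to an automorphism of $\mathfrak{g}_{\mathbb{Q}}$ up to scaling, so that in the fixed $\mathbb{Q}$-basis it is given by a matrix in $\mathrm{GL}_l(\overline{\mathbb{Q}})$ — this uses that $f$ must send the lattice of periods / the group $\Gamma$ to a commensurable copy, together with the fact that $f$ being a diffeomorphism forces integrality constraints on the induced map on $H_1$ and on the structure constants; (3) identify $\Phi([f])$ with the determinant of this matrix (via the pairing \eqref{pair} and the naturality already established in the proof of Proposition \ref{holonomyformula}); (4) conclude algebraicity.

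The main obstacle will be step (2): carefully passing from a leaf-preserving \emph{diffeomorphism} — a priori only a smooth, not algebraic, object — to an \emph{algebraic} automorphism of $\mathfrak{g}$. The mechanism should be that the action on $E_2^{0,k}$ only sees the induced map on basic/transverse cohomology and on the structure algebra, that these are determined up to finite ambiguity by the commensurability class of $\Gamma$ (rigidity of Mal'cev completions), and that $\pi_0(\Diff(N,\mathcal{G}))$ therefore maps into the arithmetic group $\mathrm{Aut}(\mathfrak{g}_{\mathbb{Q}})$ — whose elements have algebraic determinant. One must also handle the scaling ambiguity ($\varphi$ need not preserve $\Gamma$ exactly but only a commensurable subgroup, or $\varphi$ may rescale the $\mathbb{Q}$-structure): this is controlled because rescaling a Mal'cev basis by a factor $\lambda$ changes the relevant determinant by a power of $\lambda$, and the constraint that $f$ is a diffeomorphism of the compact manifold $N$ (hence induces an isomorphism on integral homology, forcing $\lambda$ to be a unit in the relevant ring, i.e. again algebraic) pins $\lambda$ down to be algebraic.
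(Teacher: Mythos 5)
Your overall strategy (reduce to the transverse data, invoke Mal'cev theory, identify $\Phi([f])$ with a determinant/eigenvalue of an ``arithmetic'' automorphism) is in the spirit of the paper, but the central step (2) contains a genuine gap, and the supporting claim on which it rests is false as stated. Since $\Gamma$ is only a finitely generated \emph{dense} subgroup of $G$ and not a lattice, $\log\Gamma$ does not determine a $\mathbb{Q}$-structure on $\mathfrak{g}$: the natural map $\mathfrak{g}_{\mathbb{Q}}\otimes_{\mathbb{Q}}\mathbb{R}\to\mathfrak{g}$ is surjective but not injective, because $\dim_{\mathbb{Q}}\mathbb{Q}\langle\log\Gamma\rangle$ typically exceeds $\dim_{\mathbb{R}}\mathfrak{g}$. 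Already for $G=\mathbb{R}$ and $\Gamma=\mathbb{Z}+\sqrt{2}\,\mathbb{Z}$ one has $\mathfrak{g}_{\mathbb{Q}}=\mathbb{Q}+\sqrt{2}\,\mathbb{Q}$, a $2$-dimensional $\mathbb{Q}$-vector space, so there is no basis of $\mathfrak{g}$ in which $\Aut(G,\Gamma)$ acts $\mathbb{Q}$-rationally; an element of $\Aut(G,\Gamma)$ is multiplication by a unit such as $1+\sqrt{2}$, and its algebraicity is precisely the conclusion to be proved, not something that follows from ``preserving the $\mathbb{Q}$-structure determined by $\log\Gamma$.'' Your assertion that the induced automorphism ``is given by a matrix in $\mathrm{GL}_l(\overline{\mathbb{Q}})$'' therefore assumes the statement rather than proving it, and the appeals to ``integrality constraints on $H_1$'' and ``$\lambda$ must be a unit in the relevant ring'' are not backed by an argument that works for a dense, non-lattice $\Gamma$ in a nonabelian $G$.

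The mechanism that actually closes this gap (and is what the paper does) is to leave $G$ and pass to the Mal'cev completion of $\Gamma$ itself: since $\Gamma$ is a finitely generated torsion-free nilpotent group, there is a simply connected nilpotent Lie group $H$, generally of dimension larger than $\dim G$, containing $\Gamma$ as a uniform lattice, together with a surjection $\pi\colon H\to G$ extending the inclusion, and the automorphism $f$ of $G$ preserving $\Gamma$ lifts to an automorphism $\tilde{f}$ of $H$ preserving the lattice. Only on $H$ does one get honest integrality: in the abelian case $(H,\Gamma)\cong(\mathbb{R}^m,\mathbb{Z}^m)$, $\tilde{f}$ is an integer matrix, the left-invariant volume form of $G$ pulls back to an eigenvector of the induced integral map on $\wedge^{l}(\mathbb{R}^m)^{*}$, and $A(f)$ is the corresponding eigenvalue, hence algebraic; the nonabelian case is then handled by induction on the nilpotency degree, using the $\tilde{f}$-invariant exact sequence $0\to[G,G]\to G\to G/[G,G]\to 0$ (with the corresponding data for $[H,H]$ and $H/[H,H]$) and the multiplicativity $A(f)=A([f,f])\cdot A(H_1(f))$ on top-degree Lie algebra cohomology. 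Your proposal never introduces this larger group $H$ nor any inductive device replacing it, so the nonabelian dense case is not covered. A secondary, fixable point: rather than working with automorphisms carrying $\Gamma$ to a commensurable or conjugate copy, you can choose a representative of each mapping class fixing a base point (possible because the identity component of $\Diff(N,\mathcal{G})$ acts transitively on $N$), which makes the induced automorphism of $G$ preserve $\Gamma$ exactly and avoids the commensurability bookkeeping altogether.
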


We show a lemma which reduces Proposition~\ref{algebraicaction} to a problem on nilpotent Lie groups and will apply Mal'cev theory to complete the proof of Proposition~\ref{algebraicaction}.

Let $N$ be a closed manifold and $\mathcal{G}$ be a Lie foliation on $N$ of dimension $k$ and codimension $l$ of which every leaf is dense in $N$. We denote the structure Lie algebra of $(N,\mathcal{G})$ by $\mathfrak{g}$. We fix a point $x$ on $N$. Then the holonomy homomorphism $hol \colon \pi_1(N,x) \longrightarrow G$ of $(N,\mathcal{G})$ is determined where $G$ is the simply connected structure Lie group determined by $\mathfrak{g}$.

 We denote the canonical action $\Diff(N,\mathcal{G}) \longrightarrow \Aut(E_{2}^{0,l})$ by $\Psi$ where $E_{2}^{l,0}$ is the $(0,l)$-th $E_2$ term of the spectral sequence of $(N,\mathcal{G})$. Note that $E_{2}^{l,0}$ is isomorphic to the $l$-th basic cohomology group of $(N,\mathcal{G})$~\cite{Hae} and hence isomorphic to $H^{l}(\mathfrak{g})$, since the leaves of $\mathcal{G}$ are dense in $N$.
\begin{lem}\label{liefol}
If we assume 
\begin{equation}\label{unimod2}
H^{l}(\mathfrak{g}) \cong \mathbb{R},
\end{equation}
then we have the following:
\begin{enumerate}
\item The diagram
\begin{equation}
\xymatrix{ \Diff(N,\mathcal{G}) \ar[r]^{\Phi} \ar[rd]_{\Psi} &  \Aut(E_{2}^{0,k}) \\
 & \Aut(E_{2}^{l,0}) \ar[u]_{i}}
\end{equation}
commutes where $i$ is the map which takes the inverse through identification of $\Aut(E_{2}^{0,k})$ and $\Aut(E_{2}^{l,0})$ with $\mathbb{R}-\{0\}$.
\item \label{2} We denote the image of $hol$ by $\Gamma$. A foliation preserving diffeomorphism $f \colon (N,\mathcal{G}) \longrightarrow (N,\mathcal{G})$ which fixes $x$ induces an automorphism of $G$ which preserves $\Gamma$.
\item We denote the group of diffeomorphisms which fix $x$ and preserve $\mathcal{G}$ by $\Diff(N,x,\mathcal{G})$, the group of automorphisms of $G$ which preserve $\Gamma$ by $\Aut(G, \Gamma)$ and the homomorphism $\Diff(N,x,\mathcal{G}) \longrightarrow \Aut(G)$ which is obtained by~\eqref{2} by $\iota$. Then the diagram 
\begin{equation}
\xymatrix{ \Diff(N,x,\mathcal{G}) \ar[r]^{\Phi} \ar[rd]_{\iota} & \Aut(E_{2}^{l,0}) \\
 & \Aut(G,\Gamma) \ar[u]_{A} }
\end{equation}
commutes where $A$ is the canonical action of the automorphism group of Lie group to the Lie algebra cohomology under the identification of $E_{2}^{l,0}$ with $H^{l}(\mathfrak{g})$.
\end{enumerate}
\end{lem}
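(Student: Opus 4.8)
The plan is to establish the three items in turn: (1) is formal, (2) is the geometric heart and rests on the developing map of the Lie foliation, and (3) identifies basic cohomology with Lie algebra cohomology and then checks naturality. For (1), I would use that the whole spectral sequence of $(N,\mathcal G)$, together with the cup product, is functorial for $\Diff(N,\mathcal G)$, so that the pairing \eqref{pair} satisfies $\Phi(f)a\cdot\Psi(f)b=f^{*}(a\cdot b)$ in $E_{2}^{l,k}=H^{k+l}(N;\mathbb R)$. Under \eqref{unimod2} the spaces $E_{2}^{0,k}$ and $E_{2}^{l,0}$ are one--dimensional, and the pairing into $E_{2}^{l,k}\cong\mathbb R$ is nonzero, since a leafwise volume form wedged with a transverse volume form is a volume form on $N$ (this is the duality of Masa invoked above). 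Since $N$ is closed and connected and $f$ acts as the identity on $H^{k+l}(N;\mathbb R)$ (the orientations of $T\mathcal G$ and its normal bundle being the ones that occur), we get $\Phi(f)\,\Psi(f)=1$ after the identifications with $\mathbb R-\{0\}$, that is, $\Phi=i\circ\Psi$.

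For (2), the tool is the developing map: by Fedida's theorem the universal cover $\widetilde N$ carries a locally trivial fibration $D\colon\widetilde N\to G$ whose fibers are the leaves of the lifted foliation $\widetilde{\mathcal G}$, with $D(\gamma\cdot p)=hol(\gamma)\cdot D(p)$ for the deck action of $\pi_{1}(N,x)$; normalize $D$ so that $D(\widetilde x)=e$ for a chosen lift $\widetilde x$ of $x$. Given $f\in\Diff(N,x,\mathcal G)$, let $\widetilde f$ be its lift fixing $\widetilde x$; then $\widetilde f$ preserves $\widetilde{\mathcal G}$ and satisfies $\widetilde f\circ\gamma=f_{\#}(\gamma)\circ\widetilde f$, where $f_{\#}$ is the automorphism of $\pi_{1}(N,x)$ induced by $f$. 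Since $\widetilde f$ permutes the fibers of $D$, the composite $D\circ\widetilde f$ is constant on fibers, hence $D\circ\widetilde f=\bar f\circ D$ for a diffeomorphism $\bar f\colon G\to G$ with $\bar f(e)=e$. Combining the two equivariances yields $\bar f\bigl(hol(\gamma)\,g\bigr)=hol(f_{\#}\gamma)\,\bar f(g)$ for all $\gamma$ and all $g\in G$ ($D$ being onto); putting $g=e$ gives $\bar f(hol(\gamma))=hol(f_{\#}\gamma)$, and substituting this back gives $\bar f(\gamma'g)=\bar f(\gamma')\,\bar f(g)$ for $\gamma'\in\Gamma:=hol(\pi_{1}(N,x))$ and $g\in G$. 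Since $\Gamma$ is dense in $G$ (the leaves being dense) and $\bar f$ is continuous, $\bar f$ is a group homomorphism; being a diffeomorphism fixing $e$ it is an automorphism of $G$, and $\bar f(\Gamma)=hol\bigl(f_{\#}\pi_{1}(N,x)\bigr)=\Gamma$ because $f_{\#}$ is onto. This $\bar f$ is the automorphism $\iota(f)$, and $\iota$ is a homomorphism since lifts fixing $\widetilde x$ compose.

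For (3), I would first make the identification $E_{2}^{l,0}\cong H_{B}^{l}(N/\mathcal G)\cong H^{l}(\mathfrak g)$ concrete. Pulling a $\mathcal G$-basic form on $N$ back to $\widetilde N$ and using that the $\widetilde{\mathcal G}$-basic forms are exactly the $D$-pullbacks of forms on $G$, a basic form on $N$ corresponds to a $\Gamma$-left-invariant form on $G$; by density of $\Gamma$ this is $G$-left-invariant, i.e.\ an element of $\bigwedge^{\bullet}\mathfrak g^{*}$ with the Chevalley--Eilenberg differential, giving $H_{B}^{\bullet}(N/\mathcal G)\cong H^{\bullet}(\mathfrak g)$. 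Under this identification $f^{*}$ on basic forms corresponds to $\widetilde f^{*}$ on $\widetilde{\mathcal G}$-basic forms, hence by $D\circ\widetilde f=\bar f\circ D$ to $\bar f^{*}$ on forms on $G$; as $\bar f$ is a group automorphism, $\bar f^{*}$ preserves left-invariant forms and realizes the natural action of the Lie algebra automorphism $d\bar f_{e}$ on $\bigwedge^{\bullet}\mathfrak g^{*}$, inducing on $H^{l}(\mathfrak g)$ exactly $A(\iota(f))$. This is the commutativity of the third diagram; in the first diagram one then composes with the conclusion of (1).

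I expect the main difficulty to be bookkeeping rather than any deep point: keeping straight the normalization $D(\widetilde x)=e$, the equivariance $\widetilde f\circ\gamma=f_{\#}(\gamma)\circ\widetilde f$ of the chosen lift and the intertwining relation it forces on $\bar f$ in (2), and the compatibility of the chain of identifications in (3) with the differentials. The one genuinely substantive ingredient is the density of $\Gamma$ in $G$, which is used twice: to upgrade $\bar f$ from being merely $\Gamma$-multiplicative to an automorphism of $G$, and to pass from $\Gamma$-invariant to $G$-invariant forms in the computation of $H_{B}^{\bullet}(N/\mathcal G)$.
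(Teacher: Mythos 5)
Your proof is correct, and in part (1) it coincides with the paper's argument: naturality of the pairing \eqref{pair} under $\Diff(N,\mathcal{G})$ plus triviality of the action on $H^{k+l}(N;\mathbb{R})$, with the nondegeneracy coming from Masa's duality. The genuine divergence is in (2): the paper never checks multiplicativity of $\overline{f}$ directly. It observes that, by density of the leaves (``leafwise constant functions are constant''), the pushforward $f_{*}$ of basic transverse vector fields is an automorphism of $\mathfrak{g}$, takes $g\in\Aut(G)$ integrating $(f_{*})^{-1}$, and notes that $\overline{f}\circ g$ preserves all left-invariant vector fields, hence is a left translation, hence the identity since it fixes $e$; so $\overline{f}$ is the automorphism integrating $f_{*}$, and the invariance of $\Gamma$ is then deduced separately from $\overline{f}(hol(\gamma))=hol(f\circ\gamma)$, exactly as in your last step. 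You instead use the $hol$-equivariance of the developing map together with density of $\Gamma$ in $G$ to get $\overline{f}(\gamma' g)=\overline{f}(\gamma')\,\overline{f}(g)$ and then, by continuity, full multiplicativity, obtaining the automorphism property and $\overline{f}(\Gamma)=\Gamma$ in one stroke. Both routes ultimately rest on leaf density, but they use it through different mechanisms (transverse parallelism and left-invariant fields in the paper, density of the holonomy group in yours). The paper's route hands you the Lie algebra automorphism $f_{*}$ directly, which is why it can dismiss (3) as ``clear from the construction''; your route is more self-contained (only Fedida's equivariance is needed) and makes (3) transparent, since the identification $E_{2}^{l,0}\cong H^{l}(\mathfrak{g})$ is realized by the same developing map --- your explicit treatment there supplies details the paper omits. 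The convention issues you flag (left versus right equivariance of $D$, homomorphism versus anti-homomorphism for $hol$) affect only bookkeeping and not the validity of the argument.
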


\begin{proof}
We prove (1). Let $f$ be an element of $\Diff(N,\mathcal{G})$. Since the pairing~\eqref{pair} is natural with respect to $\Diff(N,\mathcal{G})$ and the action of $f$ to $H^{k+l}(N;\mathbb{R})$ is trivial, we have the conclusion.

We prove (2). Let $X$ be a basic transverse vector field on $(N,\mathcal{G})$ which we regard as an element of $\mathfrak{g}$. Then $f_{*}X$ is again an element of $\mathfrak{g}$ where $f_{*} \colon C^{\infty}(TN/T\mathcal{G}) \longrightarrow C^{\infty}(TN/T\mathcal{G})$ is the map induced by $f$, since every leafwise constant function of $(N,\mathcal{G})$ is constant. Hence we have an automorphism of $\mathfrak{g}$ induced by $f_{*}$, which we denote by $f_{*}$ again. Let $(\tilde{N},\tilde{\mathcal{G}})$ be the universal cover of $(N,\mathcal{G})$. We fix a point $\tilde{x}$ on $\tilde{N}$ such that $u(\tilde{x})=x$ where $u$ is the projection of the universal covering $\tilde{N} \longrightarrow N$. We can lift $f$ to $\tilde{f} \colon (\tilde{N},\tilde{\mathcal{G}}) \longrightarrow (\tilde{N},\tilde{\mathcal{G}})$ so that $\tilde{f}$ fixes $\tilde{x}$. $\tilde{f}$ induces a diffeomorphism $\overline{f}$ on $G=\tilde{N}/\tilde{\mathcal{G}}$. Let $g$ be an automorphism of $G$ which is induced by an element $(f_{*})^{-1}$ of $\Aut(\mathfrak{g})$. Then $\overline{f} \circ g = L_{h}$ for some $h$ in $G$, since $d(\overline{f} \circ g)$ preserves every left-invariant vector field on $G$. But since $\overline{f} \circ g$ fixes $e$, we have $\overline{f} \circ g = id_{G}$.

We prove the latter part. By the definition of $hol$, we have $hol(\gamma)=p \circ \tilde{\gamma}(1)$ for each element $\gamma$ in $\pi_1(N,x)$ where $p$ is the canonical projection $p \colon \tilde{N} \longrightarrow \tilde{N}/\tilde{\mathcal{G}}=G$ and $\tilde{\gamma}$ is the lift of $\gamma$ to $\tilde{N}$ such that $\tilde{\gamma}(0)=\tilde{x}$. Then we have $\overline{f}(hol(\gamma))=\overline{f} \circ p \circ \tilde{\gamma}(1)= p \circ \tilde{f} \circ \tilde{\gamma}(1) = p \circ \tilde{f \circ \gamma}(1) = hol(f \circ \gamma)$, since $\tilde{f}$ fixes $\tilde{x}$.

(3) is clear from the construction.
\end{proof}

Note that every element of $\pi_{0}(\Diff(N,\mathcal{G}))$ is represented by an element of $\Diff(N,x,\mathcal{G})$. In fact, $(N,\mathcal{F})$ is a Lie foliation and the identity component of $\Diff(N,\mathcal{G})$ acts to $N$ transitively. Hence Proposition~\ref{algebraicaction} is reduced to the algebraicity of $A \colon \Aut(G,\Gamma) \longrightarrow \Aut(E^{0,l}_2)$. 

We prove the following proposition applying Mal'cev theory.

\begin{prop}\label{malcev}
Let $G$ be an $l$-dimensional simply connected nilpotent Lie group and $\Gamma$ be a finitely generated dense subgroup of $G$. We put $\mathfrak{g}=\Lie(G)$. If we denote the group of automorphisms of $G$ which preserves $\Gamma$ by $\Aut(G,\Gamma)$, then the image of the canonical action
\begin{equation}
A \colon \Aut(G,\Gamma) \longrightarrow \Aut(H^{l}(\mathfrak{g}))
\end{equation}
is contained in the set of algebraic numbers where $\Aut(H^{l}(\mathfrak{g}))$ is canonically identified with $\mathbb{R}-\{0\}$.
\end{prop}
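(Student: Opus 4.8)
The plan is to translate the statement into a question about a single rational matrix and finish with an eigenvalue argument. Since $G$ is simply connected the exponential map is a diffeomorphism, so every automorphism $\Phi$ of $G$ is of the form $\exp_{G}\circ\, d\Phi\circ\exp_{G}^{-1}$ for a unique $d\Phi\in\Aut(\mathfrak g)$; moreover, as $\mathfrak g$ is nilpotent of dimension $l$ we have $H^{l}(\mathfrak g)\cong\wedge^{l}\mathfrak g^{*}$, on which $\Aut(\mathfrak g)$ acts through $\det(d\Phi)^{\pm1}$ (the sign depending only on the cohomological convention). Since the nonzero algebraic numbers form a field, it therefore suffices to prove that $\det(d\Phi)$ is algebraic for every $\Phi\in\Aut(G,\Gamma)$.

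I would then bring in Mal'cev theory. As a subgroup of the simply connected nilpotent Lie group $G$, the group $\Gamma$ is torsion-free, and it is finitely generated by hypothesis; hence it has a Mal'cev completion $\widehat\Gamma$, a simply connected nilpotent Lie group in which $\Gamma$ is a lattice, with $\widehat{\mathfrak g}:=\Lie(\widehat\Gamma)$ carrying the $\mathbb Q$-form $\widehat{\mathfrak g}_{\mathbb Q}:=\mathrm{span}_{\mathbb Q}(\log\Gamma)$, which is a $\mathbb Q$-subalgebra with $\widehat{\mathfrak g}_{\mathbb Q}\otimes_{\mathbb Q}\mathbb R=\widehat{\mathfrak g}$. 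By Mal'cev rigidity the inclusion $\Gamma\hookrightarrow G$ extends uniquely to a continuous homomorphism $h\colon\widehat\Gamma\to G$; writing $h=\exp_{G}\circ\, dh\circ\exp_{\widehat\Gamma}^{-1}$ we see that $h(\widehat\Gamma)=\exp_{G}(\mathrm{im}\,dh)$ is the closed connected subgroup of $G$ with Lie algebra $\mathrm{im}\,dh$, and since it contains the dense subgroup $\Gamma$ it must be all of $G$. Thus $dh\colon\widehat{\mathfrak g}\to\mathfrak g$ is a surjective Lie algebra homomorphism and $\mathfrak g\cong\widehat{\mathfrak g}/\mathfrak n$ with $\mathfrak n:=\ker dh$ an ideal.

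Now fix $\Phi\in\Aut(G,\Gamma)$. Its restriction $\Phi|_{\Gamma}$ is an automorphism of $\Gamma$, which again by rigidity extends to $\widehat\Phi\in\Aut(\widehat\Gamma)$. Since $\widehat\Phi(\Gamma)=\Gamma$ and $d\widehat\Phi\circ\log=\log\circ\,\widehat\Phi$, the operator $d\widehat\Phi$ preserves $\log\Gamma$ and hence the $\mathbb Q$-form $\widehat{\mathfrak g}_{\mathbb Q}$; being thus represented by a matrix with rational entries, $d\widehat\Phi$ has characteristic polynomial in $\mathbb Q[t]$, so all of its eigenvalues are algebraic numbers. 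On the other hand $h\circ\widehat\Phi$ and $\Phi\circ h$ are continuous homomorphisms agreeing on the dense subgroup $\Gamma$, hence coincide, so $dh\circ d\widehat\Phi=d\Phi\circ dh$; consequently $d\widehat\Phi$ preserves $\mathfrak n$ and induces $d\Phi$ on $\widehat{\mathfrak g}/\mathfrak n\cong\mathfrak g$, so the characteristic polynomial of $d\Phi$ divides that of $d\widehat\Phi$ in $\mathbb R[t]$. Therefore $\det(d\Phi)$ is, up to sign, a product of eigenvalues of $d\widehat\Phi$ and hence algebraic, so $A(\Phi)=\det(d\Phi)^{\pm1}$ is algebraic.

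I expect the real work to lie in the Mal'cev-theoretic input of the second and third paragraphs — verifying that $\mathrm{span}_{\mathbb Q}(\log\Gamma)$ is a $\mathbb Q$-subalgebra with $\widehat{\mathfrak g}_{\mathbb Q}\otimes_{\mathbb Q}\mathbb R=\widehat{\mathfrak g}$, that homomorphisms out of the lattice $\Gamma$ extend to $\widehat\Gamma$, and that $h$ is surjective because connected subgroups of simply connected nilpotent Lie groups are closed — rather than in the concluding linear algebra. The one point worth emphasizing is that one does \emph{not} need the ideal $\mathfrak n=\ker dh$ to be defined over $\mathbb Q$: it suffices that $d\Phi$ be a subquotient of the $\mathbb Q$-rational operator $d\widehat\Phi$, whose eigenvalues are automatically algebraic.
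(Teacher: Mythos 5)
Your proposal is correct in substance, but it takes a genuinely different route from the paper's. Both arguments rest on the same Mal'cev package: you form the Mal'cev completion $\widehat\Gamma$ of the finitely generated torsion-free nilpotent group $\Gamma$ (the paper's $H$), extend the inclusion $\Gamma\hookrightarrow G$ to a surjective homomorphism $h\colon\widehat\Gamma\to G$ (the paper's $\pi$), and lift $\Phi|_{\Gamma}$ to an automorphism of the completion preserving $\Gamma$. From that point the paper argues by induction on the rank of $H$: it passes to $[H,H]$ and $H/[H,H]$, uses the multiplicativity $A(f)=A([f,f])\cdot A(H_1(f))$ coming from the exact sequence $0\to[G,G]\to G\to G/[G,G]\to 0$, and settles the abelian base case $(\mathbb{R}^m,\mathbb{Z}^m)$, where the lifted automorphism lies in $\SL(m;\mathbb{Z})$ and acts on the volume form with an algebraic eigenvalue. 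You avoid the induction entirely by invoking one further standard fact of Mal'cev theory, namely that $\mathrm{span}_{\mathbb{Q}}(\log\Gamma)$ is a $\mathbb{Q}$-form of $\Lie(\widehat\Gamma)$ preserved by any automorphism preserving $\Gamma$; this makes the characteristic polynomial of $d\widehat\Phi$ rational in one stroke, and the identification $H^{l}(\mathfrak g)\cong\wedge^{l}\mathfrak g^{*}$ (unimodularity of nilpotent algebras) reduces the proposition to the algebraicity of $\det(d\Phi)$, which follows because $d\Phi$ is the map induced by $d\widehat\Phi$ on $\Lie(\widehat\Gamma)/\ker dh$, so its characteristic polynomial divides a rational one. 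What each approach buys: yours is shorter and dispenses with the paper's implicit lattice facts (that $i(\Gamma)\cap[H,H]$ and the image of $\Gamma$ in $H/[H,H]$ are again lattices, so the inductive hypothesis applies); the paper's only needs rationality in the abelian layer, where it is visible as integrality of $\SL(m;\mathbb{Z})$, at the cost of running the induction.

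One small repair is needed in your third paragraph: $\Gamma$ is a lattice in $\widehat\Gamma$, hence discrete and \emph{not} dense there, so you cannot conclude $h\circ\widehat\Phi=\Phi\circ h$ from agreement on a dense subgroup of $\widehat\Gamma$. The conclusion is nevertheless correct: both maps are continuous homomorphisms $\widehat\Gamma\to G$ extending the same homomorphism $\Phi|_{\Gamma}\colon\Gamma\to G$, so they coincide by the uniqueness clause of the Mal'cev extension theorem you already cite for the existence of $h$. With that wording fixed, the argument goes through.
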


\begin{proof}
At first, we apply Mal'cev theory following Ghys~\cite{Ghy}. We refer to~\cite{Mal} and Chapter II of~\cite{Rag} on Mal'cev theory. We have a simply connected nilpotent Lie group $H$ and an embedding $i \colon \Gamma \longrightarrow H$ such that $i(\Gamma)$ is a uniform lattice of $H$ by Mal'cev theory. The homomorphism $i^{-1} \colon i(\Gamma) \longrightarrow G$ can be extend to a homomorphism $\pi \colon H \longrightarrow G$ again by Mal'cev theory. $\pi$ is clearly surjective, since $\Gamma$ is dense in $G$. We also have a lift $\tilde{f}$ of $f$ which is an automorphism of $H$ and preserves $i(\Gamma)$ again by Mal'cev theory, since $i(\Gamma)$ is a uniform lattice of $H$. Then we have a diagram:

\begin{equation}\label{diag2}
\xymatrix{ 
H \ar[dd]_{\pi} \ar[rrr]^{\tilde{f}} & & & H \ar[dd]^{\pi} \\
         & \Gamma \ar[ul]^{i} \ar[ld] \ar[r]^{f} & \Gamma \ar[ur]_{i} \ar[rd] \\
G \ar[rrr]^{f} & & & G }
\end{equation}
which satisfies the following conditions:
\begin{description}
\item[(a)] $H$ is nilpotent,
\item[(b)] $\pi$ is surjective and
\item[(c)] $i$ is an embedding of a uniform lattice.
\end{description}

Proposition~\ref{malcev} follows from the following lemma: 

\begin{lem}\label{ind}
If we have a diagram~\eqref{diag2} which satisfies conditions (a), (b) and (c), then $A(f) \colon H^{l}(\mathfrak{g}) \longrightarrow H^{l}(\mathfrak{g})$ is algebraic under the canonical identification of $\Aut(H^{l}(\mathfrak{g}))=\mathbb{R}-\{0\}$.
\end{lem}

We prove Lemma~\ref{ind} inductively on the rank of $H$ as a nilpotent Lie group. 

If $H$ is abelian, $(H,\Gamma)$ is isomorphic to $(\mathbb{R}^{m},\mathbb{Z}^{m})$. Then $f$ is an element of $\SL(m;\mathbb{Z})$. $f$ induces a homomorphism $\hat{f} \colon \wedge^{l} (\mathbb{R}^{m})^{*} \longrightarrow \wedge^{l} (\mathbb{R}^{m})^{*}$ and $\hat{f}$ has integral entries with respect to the standard basis, since the entries of $\hat{f}$ are minor determinants of $f$. $H^{l}(\mathfrak{g})$ is generated by an element represented by left invariant volume forms of $G$ and left invariant volume forms of $G$ are eigenvectors of $\hat{f}$ by the diagram~\eqref{diag2}. Then $A(f)$ is algebraic, since $A(f)$ is the eigenvalue of $\hat{f}$ with respect to left invariant volume forms of $G$.

Assume that we have the diagram~\eqref{diag2} which satisfies the conditions (a),(b) and (c) where the rank of $H$ is $n$ and the claim of Lemma~\ref{ind} is correct if the rank of $H$ is less than $n$. Then $A([f,f])$ and $A(H_1(f))$ are algebraic under the identification of $H^{l'}([\mathfrak{g},\mathfrak{g}])$ and $H^{l''}(\mathfrak{g}/[\mathfrak{g},\mathfrak{g}])$ with $\mathbb{R}-\{0\}$ where $l'=\dim [\mathfrak{g},\mathfrak{g}]$ and $l''=\dim \mathfrak{g}/[\mathfrak{g},\mathfrak{g}]$ by the following diagrams:

\begin{equation}
\xymatrix{ 
[H,H] \ar[dd]_{[\pi,\pi]} \ar[rrr]^{[\tilde{f},\tilde{f}]} & & & [H,H] \ar[dd]^{[\pi,\pi]} \\
         & \Gamma \cap [H,H] \ar[ul]^{[i,i]} \ar[ld] \ar[r]^{[f,f]} & \Gamma \cap [H,H] \ar[ur]_{[i,i]} \ar[rd] \\
[G,G] \ar[rrr]^{[f,f]} & & & [G,G] }
\end{equation}
and
\begin{equation}
\xymatrix{ 
H/[H,H] \ar[dd]_{H_1(\pi)} \ar[rrr]^{H_1(\tilde{f})} & & & H/[H,H] \ar[dd]^{H_1(\pi)} \\
         & p(\Gamma) \ar[ul]^{H_1(i)} \ar[ld] \ar[r]^{H_1(f)} & p(\Gamma) \ar[ur]_{H_1(i)} \ar[rd] \\
G/[G,G] \ar[rrr]^{H_1(f)} & & & G/[G,G] }
\end{equation}
where $p$ is the canonical projection $H \longrightarrow [H,H]$. Then we have the algebraicity of $A(f)$, since we have $A(f)=A([f,f])\cdot A(H_1(f))$ under the identification of $\Aut(H^{l}(\mathfrak{g})), H^{l'}([\mathfrak{g},\mathfrak{g}])$ and $H^{l''}(\mathfrak{g}/[\mathfrak{g},\mathfrak{g}])$ with $\mathbb{R}-\{0\}$ by the following diagram:
  
\begin{equation}
\xymatrix{ 0 \ar[r] & [G,G] \ar[r] \ar[d]^{[f,f]} & G \ar[r] \ar[d]^{f} & G/[G,G] \ar[r] \ar[d]^{H_1(f)} & 0 \\
 0 \ar[r] & [G,G] \ar[r] & G \ar[r] & G/[G,G] \ar[r] & 0.}
\end{equation}

Hence Lemma~\ref{ind} and Proposition~\ref{malcev} are proved.
\end{proof}

\section{Examples}

\subsection{Torus fibration over $S^1$}

Let $A$ be an element of $\SL(n;\mathbb{Z})$ with an eigenvector $v$ with respect to an eigenvalue $\lambda$. Assume that the components of $v$ are linearly independent over $\mathbb{Q}$ and $\lambda$ is a positive real number. For an example, take

\begin{equation}
A=\begin{pmatrix} 2 & 1 \\ 1 & 1 \end{pmatrix}, v= \begin{pmatrix} \frac{\sqrt{5}-1}{2} \\ -1 \end{pmatrix}, \lambda=\frac{3-\sqrt{5}}{2}. 
\end{equation}
$A$ induces a diffeomorphism $\overline{A}$ on $T^{n}=\mathbb{R}^{n}/\mathbb{Z}^{n}$. We denote the mapping torus $T^{n} \times [0,1]/(\overline{A}w,0) \sim (w,1)$ of $\overline{A}$ by $M$ and define a map $\pi \colon M \longrightarrow S^1$ by $\pi([(w,t)])=t$, which gives a $T^n$ fibration over $S^1$. Since $v$ is an eigenvector of $A$, we have a foliation $\mathcal{F}$ on $M$ formed by the lines parallel to $v$ in each $T^{n}$ fiber of $\pi$. By the assumption on $v$, the leaves of $\mathcal{F}$ are dense in the fibers of $\pi$. $(M,\mathcal{F})$ is Riemannian since we can construct a bundle-like metric $g$ of $(M,\mathcal{F})$ by $g = \rho(t)g_0 + (1-\rho(t))g_0 + dt \otimes dt$ where $g_0$ is the flat metric on $T^n$ and $\rho(t)$ is a smooth function which satisfies $\rho(t)=0$ near $0$ and $\rho(t)=1$ near $1$. Note that the structure Lie algebra of $(M,\mathcal{F})$ is abelian.

In this case, the mean curvature form of $(M,\mathcal{F},g)$ is a closed form on $S^1$ and we can calculate the \'{A}lvarez class $[\kappa_b]$ of $(M,\mathcal{F})$ directly to obtain
\begin{equation}
\int_{S^1}[\kappa_b]= \log \lambda.
\end{equation}

\subsection{A Riemannian foliation on a solvmanifold}

We present an example of a $2$-dimensional Riemannian foliation on a $6$-dimensional nilmanifold bundle over $S^1$ with nonabelian nilpotent structure Lie algebra and nontrivial \'{A}lvarez class.

Let $p$ be a prime and $\alpha$ be an element of $\mathbb{Z}(\sqrt{p})$ which has an inverse $\beta$ in $\mathbb{Z}(\sqrt{p})$. We put $k=\GCD(\alpha_2,\beta_2)$ where $\alpha_2$ and $\beta_2$ are integers which satisfies $\alpha=\alpha_1+\alpha_2\sqrt{p}$ and $\beta=\beta_1+\beta_2\sqrt{p}$ for some integers $\alpha_1$ and $\beta_1$. Let $G$ be a nilpotent Lie group which is defined by
\begin{equation}
G=\Big\{ \begin{pmatrix} 1 & x & z \\ 0 & 1 & y \\ 0 & 0 & 1 \end{pmatrix} \Big| x,y,z \in \mathbb{R} \Big\}
\end{equation}
and $\Gamma$ be a subgroup of $G$ which is generated by
\begin{equation}
A_1=\begin{pmatrix} 1 & 1 & 0 \\ 0 & 1 & 0 \\ 0 & 0 & 1 \end{pmatrix},
A_2=\begin{pmatrix} 1 & 0 & 0 \\ 0 & 1 & 1 \\ 0 & 0 & 1 \end{pmatrix},
A_3=\begin{pmatrix} 1 & k\sqrt{p} & 0 \\ 0 & 1 & 0 \\ 0 & 0 & 1 \end{pmatrix},
A_4=\begin{pmatrix} 1 & 0 & 0 \\ 0 & 1 & k\sqrt{p} \\ 0 & 0 & 1 \end{pmatrix}.
\end{equation}
$\Gamma$ is dense in $G$, since we have $[A_1,A_2]=\begin{pmatrix} 1 & 0 & 1 \\ 0 & 1 & 0 \\ 0 & 0 & 1 \end{pmatrix}$, $[A_1,A_4]=\begin{pmatrix} 1 & 0 & k\sqrt{p} \\ 0 & 1 & 0 \\ 0 & 0 & 1 \end{pmatrix}$ and the Lie algebra of the closure of $\Gamma$ is equal to the Lie algebra of $G$.

We define a Lie group $H$ by $H=G \oplus \mathbb{R}^{2}$. Let $\iota'$ be a homomorphism $\iota' \colon \Gamma \longrightarrow \mathbb{R}^{2}$ which is defined by
\begin{equation}
\iota'\Big(\begin{pmatrix} 1 & x_1 + x_2 \sqrt{p} & z \\ 0 & 1 & y_1 + y_2 \sqrt{p} \\ 0 & 0 & 1 \end{pmatrix} \Big)=(x_2,y_2)
\end{equation}
where $x_1,x_2,y_1$ and $y_2$ are integers and we define an embedding $\iota \colon \Gamma \longrightarrow H$ by $\iota(g)=(g,\iota'(g))$ for every $g$. Then $\iota(\Gamma)$ is a uniform lattice of $H$. The fibers of the first projection $H \longrightarrow G$ are preserved by the right multiplication of $\iota(\Gamma)$ to $H$ and define a $G$-Lie foliation $\mathcal{G}$ of dimension $2$ and codimension $3$ on $H/\iota(\Gamma)$.

Let $\begin{pmatrix} a' & b' \\ c' & d' \end{pmatrix}$ be an element of $\SL(2;\mathbb{Z})$ and put $\begin{pmatrix} a & b \\ c & d \end{pmatrix}=\begin{pmatrix} a'\alpha & b'\alpha \\ c'\alpha & d'\alpha \end{pmatrix}$. Let $f$ be a map $G \longrightarrow G$ defined by 
\begin{equation}
f\begin{pmatrix} 1 & x & z \\ 0 & 1 & y \\ 0 & 0 & 1 \end{pmatrix}=\begin{pmatrix} 1 & ax+by & \alpha^{2} z+acx^2+bdy^2+bcxy \\ 0 & 1 & cx+dy \\ 0 & 0 & 1 \end{pmatrix}.
\end{equation}
Then $f$ is a homomorphism from $G$ to $G$ by the definition of $\begin{pmatrix} a & b \\ c & d \end{pmatrix}$ and $\alpha$. Since $f$ is bijective, $f$ is an automorphism. Clearly $f$ preserves $\Gamma$. Moreover $f(\Gamma)=\Gamma$ follows from the definition of $k$ and $\Gamma$. Note that there exists an element $g$ of $\Gamma$ which satisfies $g=\begin{pmatrix} 1 & 0 & \delta \\ 0 & 1 & 0 \\ 0 & 0 & 1 \end{pmatrix}$ for every $\delta$ in $\mathbb{Z}(\sqrt{p})$ which has a form $\delta=\delta_1+\delta_2 k\sqrt{p}$ where $\delta_1$ and $\delta_2$ are integers.

Since $\iota(\Gamma)$ is a uniform lattice of $H$, the automorphism $f|_{\Gamma}$ of $\Gamma$ is uniquely extended to an automorphism $\tilde{f}$ of $H$ and induces a diffeomorphism $\overline{f}$ on $H/\iota(\Gamma)$. Since $\overline{f}$ preserves $\mathcal{G}$, we have a foliation $\mathcal{F}$ on the mapping torus $M$ of $\overline{f}$ which is Riemannian. By $f^{*}(dx \wedge dy \wedge dz)=\alpha^{3} dx \wedge dy \wedge dz$ and Proposition~\ref{holonomyformula}, we have 
\begin{equation}
\int_{S^1}[\kappa_b]= \log \alpha^3
\end{equation}
where $S^1$ is the base space of the canonical fibration $M \longrightarrow S^1$ of the mapping torus and $[\kappa_b]$ is the \'{A}lvarez class of $(M,\mathcal{F})$.

\end{document}